\newtheorem{thm}{Theorem}[section]
\newtheorem{lemma}[thm]{Lemma}
\newtheorem{prop}[thm]{Proposition}
\theoremstyle{definition}
\theoremstyle{remark}
\newtheorem{rmk}[thm]{Remark}
\newtheorem{exam}[thm]{Example}
\numberwithin{equation}{section}
\newcommand{\C}{\mathbb{C}}
\newcommand{\N}{\mathbb{N}}
\newcommand{\sg}{{\mathfrak S}}
\newcommand{\x}{{\mathbf x}}
\newcommand{\ds}{\displaystyle}
\newcommand{\bfone}{\mathbf{1}}
\DeclareMathOperator{\ch}{ch}
\DeclareMathOperator{\sh}{sh}
\DeclareMathOperator{\des}{des}
\DeclareMathOperator{\Des}{DES}
\DeclareMathOperator{\Dex}{DEX}
\DeclareMathOperator{\maj}{maj}
\DeclareMathOperator{\exc}{exc}
\DeclareMathOperator{\Lie}{Lie}
\DeclareMathOperator{\Ind}{Ind}
\title[Unimodality of Eulerian quasisymmetric functions]
{Unimodality of Eulerian quasisymmetric functions}
\author[Henderson]{Anthony Henderson$^1$}
\address{School of Mathematics and Statistics\\
University of Sydney NSW 2006\\
Australia}
\email{anthony.henderson@sydney.edu.au}
\thanks{$^{1}$Supported in part by
Australian Research Council grant DP0985184.}
\author[Wachs]{Michelle L.\ Wachs$^2$}
\address{Department of Mathematics\\
University of Miami\\
Coral Gables, FL 33124\\
USA}
\email{wachs@math.miami.edu}
\thanks{$^{2}$Supported in part by National Science Foundation grant
DMS 0902323}
\subjclass[2000]{Primary 05E05; Secondary 05A15, 05A30}
\begin{document}
\begin{abstract}
We prove two conjectures of Shareshian and Wachs about Eulerian 
quasisymmetric functions and polynomials. The first states that the 
cycle type Eulerian quasisymmetric function $Q_{\lambda,j}$ is 
Schur-positive, and moreover that the sequence $Q_{\lambda,j}$ as $j$ 
varies is Schur-unimodal. The second conjecture, which we prove using 
the first, states that the cycle type $(q,p)$-Eulerian polynomial
\newline $A_\lambda^{\maj,\des,\exc}(q,p,q^{-1}t)$ is $t$-unimodal. 
\end{abstract}
\maketitle
\section{Introduction}

The Eulerian polynomial $A_n(t) = \sum_{j=0}^{n-1} a_{n,j}t^j$ is the 
enumerator of  permutations in the symmetric group $\sg_n$ by their 
number of descents or their number of excedances.  Two well-known 
and important properties of   the Eulerian polynomials  are symmetry 
and unimodality  (see \cite[p.~292]{com}).  That is,  the sequence 
of coefficients $(a_{n,j})_{0\le j \le n-1}$  satisfies
\begin{equation}
a_{n,j} = a_{n,n-1-j}
\end{equation}
and
\begin{equation}
a_{n,0} \le a_{n,1} \le  \dots \le a_{n, \lfloor \frac{n-1}{2} 
\rfloor} = a_{n, \lfloor \frac{n}{2} \rfloor} \ge \dots \ge 
a_{n,n-2}\ge a_{n,n-1}.
\end{equation}

  Brenti \cite[Theorem 3.2]{br} showed that  the cycle type Eulerian 
polynomial  $A^{\exc}_\lambda(t)$, which enumerates permutations of 
fixed cycle type $\lambda$ by their  number of excedances, is also 
symmetric and unimodal.   More recently, Shareshian and Wachs 
\cite{sw} proved that the $q$-Eulerian polynomial
$A_n^{\maj,\exc}(q,q^{-1}t)$, which is the enumerator for the joint 
distribution of the  major index and excedance number over 
permutations in $\sg_n$, is  symmetric and unimodal when viewed as a
polynomial in $t$ with coefficients in $\N[q]$.  They showed that 
symmetry holds for the cycle type $(q,p)$-analog 
$A_{\lambda}^{\maj,\des,\exc}(q,p,q^{-1}t)$ as  a polynomial in $t$
with coefficients in $\N[q,p]$ and conjectured that unimodality holds 
as well.  (Symmetry fails for the less refined $(q,p)$-analog 
$A_{n}^{\maj,\des,\exc}(q,p,q^{-1}t)$, see \cite{sw}.)

In this paper we prove the unimodality conjecture of Shareshian and 
Wachs, by  first establishing a symmetric function analog, also 
conjectured in \cite{sw},  and then using Gessel's theory of 
quasisymmetric functions to deduce the unimodality of
$A_{\lambda}^{\maj,\des,\exc}(q,p,q^{-1}t)$.

The symmetric function analog of the unimodality conjecture  involves 
the cycle type refinements $Q_{\lambda,j}$ of the Eulerian 
quasisymmetric functions $Q_{n,j}$, which were  introduced  by 
Shareshian and Wachs \cite{sw} as a tool for  studying  the 
$q$-Eulerian polynomials  and  the $(q,p)$-Eulerian polynomials.
  Both $Q_{n,j} $ and  $Q_{\lambda,j}$ were shown to be symmetric 
functions in \cite{sw}.  Moreover  such properties as $p$-positivity, 
Schur-positivity,  and Schur-unimodality were established for 
$Q_{n,j}$ and conjectured for   $Q_{\lambda,j}$.

In subsequent work, Sagan, Shareshian and Wachs \cite{ssw} 
established  $p$-positivity of $Q_{\lambda,j}$ by proving 
\cite[Conjecture 6.5]{sw}, which gives the expansion of 
$Q_{\lambda,j}$  in the power-sum symmetric function basis. This was 
used to  obtain a cyclic sieving result for the $q$-Eulerian 
polynomials refined by cycle type.
Here we continue the study of Eulerian quasisymmetric functions by
   establishing  Schur-positivity of $Q_{\lambda,j}$ and 
Schur-unimodality of the sequence $(Q_{\lambda,j})_{0\le j\le n-1}$.

We briefly recall the main concepts involved, referring the reader to 
\cite{sw} for the background and standard notation.
The Eulerian quasisymmetric functions $Q_{n,j}$ in 
$\x=(x_1,x_2,x_3,\cdots)$, for $n,j\in\N$, are defined in \cite{sw} by
\begin{equation}
Q_{n,j}(\x):=\sum_{\substack{\sigma\in\sg_n\\\exc(\sigma)=j}}F_{\Dex(\sigma),n}(\x),
\end{equation}
where $\Dex(\sigma)$ is the subset of $[n-1]:=\{1,2,\cdots,n-1\}$ 
defined in \cite[Section 2]{sw}, and
$F_{S,n}(\x)$ is the fundamental quasisymmetric function of degree 
$n$ associated to $S\subseteq[n-1]$. It is immediate from this 
definition that $Q_{n,j}=0$ unless $j\leq n-1$.

The apparently quasisymmetric functions $Q_{n,j}$ are symmetric 
functions by \cite[Theorem 5.1(1)]{sw}, and form a symmetric 
sequence, in the sense that  $Q_{n,j}=Q_{n,n-1-j}$, by 
\cite[(5.3)]{sw}. Moreover, \cite[Theorem 1.2]{sw} shows that they 
have the following generating series:
\begin{equation} \label{qnjeqn}
\sum_{n,j}Q_{n,j}\,t^jz^n=\frac{(1-t)H(z)}{H(zt)-tH(z)},
\end{equation}
where as usual $H(z)=\sum_{n\ge 0}h_nz^n$ is the generating series of 
the complete homogeneous symmetric functions.

Symmetry and Schur-unimodality of the sequence $(Q_{n,j})_{0\leq 
j\leq n-1}$ are  consequences of \eqref{qnjeqn}.  Various ways to see 
this are given in \cite{sw}: one way involves symmetric function 
manipulations  of Stembridge \cite{stembridge}  and  another 
involves geometric considerations based on work of  Procesi \cite{pr} 
and Stanley \cite{st1}.   Indeed, \eqref{qnjeqn} implies that 
$Q_{n,j}$ is the Frobenius characteristic of the representation of 
$\sg_n$ on the degree-$2j$ cohomology of the toric variety associated 
with the Coxeter complex of $\sg_n$.  Schur-unimodality then follows 
from the hard Lefschetz theorem, see \cite{st1}.
See \cite[Section 7]{sw} for  other occurrences of $Q_{n,j}$.

The cycle type Eulerian quasisymmetric functions $Q_{\lambda,j}$, for 
$\lambda$ a partition of $n\in \N$ and $j \in \N$, are a refinement 
of the above symmetric  functions in the sense that 
$Q_{n,j}=\ds\sum_{\lambda\vdash n}Q_{\lambda,j}$. The definition in 
\cite{sw} is
\begin{equation}
Q_{\lambda,j}(\x):=\sum_{\substack{\sigma\in\sg_n\\\exc(\sigma)=j\\\lambda(\sigma)=\lambda}}F_{\Dex(\sigma),n}(\x),
\end{equation}
where $\lambda(\sigma)$ denotes the cycle type of $\sigma$. It is 
immediate from this definition that $Q_{\lambda,j}=0$ unless $j\leq 
n-k$, where $k$ is the multiplicity of $1$ as a part of $\lambda$.

The quasisymmetric functions $Q_{\lambda,j}$ are symmetric functions 
by \cite[Theorem 5.8]{sw}, and satisfy
\begin{equation}
Q_{\lambda,j}=Q_{\lambda,n-k-j}
\end{equation}
by \cite[Theorem 5.9]{sw}. These functions may all be obtained from 
those where $\lambda$ has a single part, using the operation of 
plethysm which we denote by $[\;]$. Explicitly, \cite[Corollary 
6.1]{sw} states that if $m_i$ denotes the multiplicity of $i$ as a 
part of $\lambda$, then
\begin{equation} \label{plethysm1eqn}
\sum_{j}Q_{\lambda,j}\,t^j=\prod_{i\geq 1}h_{m_i}[\sum_{j}Q_{(i),j}\,t^j].
\end{equation}
The following consequence of \eqref{plethysm1eqn} is also part of 
\cite[Corollary 6.1]{sw}:
\begin{equation} \label{plethysm2eqn}
\sum_{n,j}Q_{n,j}\,t^jz^n=\sum_{n}h_n[\sum_{i,j}Q_{(i),j}\,t^jz^i].
\end{equation}
Note that \eqref{qnjeqn},\eqref{plethysm1eqn},\eqref{plethysm2eqn} 
effectively provide an alternative definition of $Q_{\lambda,j}$. In 
this paper we will use only these equations, not the definition of 
$Q_{\lambda,j}$ in terms of quasisymmetric functions.

The first result of this paper appeared as \cite[Conjecture 5.11]{sw}.
\begin{thm} \label{schurthm}
The symmetric function $Q_{\lambda,j}$ is Schur-positive. Moreover, 
if $k$ is the multiplicity of $1$ in $\lambda$ then the symmetric 
sequence 
$$Q_{\lambda,0},Q_{\lambda,1},\cdots,Q_{\lambda,n-k-1},Q_{\lambda,n-k}$$ 
is Schur-unimodal in the sense that $Q_{\lambda,j}-Q_{\lambda,j-1}$ 
is Schur-positive for $1\leq j\leq\frac{n-k}{2}$.
\end{thm}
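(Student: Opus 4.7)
The overall strategy is to use the plethystic identity \eqref{plethysm1eqn} to reduce Theorem \ref{schurthm} to the case of a single-part partition $\lambda=(n)$, and then to establish that case by lifting the known Lefschetz structure witnessing Schur-unimodality of $Q_{n,j}$. The appropriate framework is the notion of a \emph{Schur-unimodal polynomial}: a polynomial $f(t)=\sum_{j=0}^{N}f_jt^j$ with Schur-positive coefficients in the ring of symmetric functions, satisfying $f_j=f_{N-j}$ and such that $f_j-f_{j-1}$ is Schur-positive for $1\leq j\leq N/2$.

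Two stability lemmas would underlie the reduction. First, the product of two Schur-unimodal polynomials is Schur-unimodal; this follows from the classical pairing argument for products of palindromic unimodal sequences, carried out with Schur-positivity in place of nonnegativity. Second, and more substantially, if $f(t)$ is a Schur-unimodal polynomial with $f(0)=1$, then $h_m[f(t)]$ is Schur-unimodal for every $m\geq 0$. The natural proof is representation-theoretic: realize $f(t)$ as the graded Frobenius characteristic of an $\sg_d$-module $V=\bigoplus_j V_j$ endowed with a commuting $\mathfrak{sl}_2$-action whose weight-space decomposition matches the $t$-grading and whose raising operator supplies a Lefschetz structure witnessing Schur-unimodality. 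Then $h_m[f(t)]$ is the Frobenius characteristic of the natural $\sg_{md}$-module induced from $V^{\otimes m}$ along $\sg_d\wr\sg_m\hookrightarrow\sg_{md}$, which inherits a commuting $\mathfrak{sl}_2$-action from $V$, and standard $\mathfrak{sl}_2$-representation theory then yields Schur-unimodality on each $\sg_{md}$-isotypic component. Applying these two lemmas inductively to \eqref{plethysm1eqn} reduces the proof to showing Schur-unimodality of $\sum_j Q_{(n),j}\,t^j$ for each $n$.

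For the single-part case, I would exploit \eqref{plethysm2eqn} together with the known realization of $\sum_j Q_{n,j}\,t^j$ as the graded Frobenius character of $H^{*}(X_n)$, where $X_n$ is the toric variety associated with the Coxeter complex of $\sg_n$ and the hard Lefschetz theorem supplies the requisite $\mathfrak{sl}_2$-action. The goal is to identify $\sum_j Q_{(n),j}\,t^j$ with a canonical $(\sg_n\times\mathfrak{sl}_2)$-subquotient of $H^{*}(X_n)$, or of a closely related variety, so that the same Lefschetz argument applies. The main obstacle is precisely this step: formal inversion of \eqref{plethysm2eqn} by plethystic logarithm does not preserve Schur-positivity, so a purely combinatorial-algebraic manipulation seems unlikely to suffice, and producing an explicit geometric or representation-theoretic construction that isolates the $(n)$-cycle contribution and carries a compatible Lefschetz action is expected to require the bulk of the technical work.
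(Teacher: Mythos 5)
Your reduction to the single-part case via the two stability lemmas is in the right spirit (the paper's passage from $V_{(n),j}$ to $V_{\lambda,j}$ via Proposition 2.6 implicitly encodes the same idea), but your proposal leaves the crucial single-part case unresolved, and you say so yourself. The gap is real: identifying $\sum_j Q_{(n),j}t^j$ with a Lefschetz-compatible subquotient of $H^*(X_n)$ is not something you carry out, and no such identification is known. Your pessimism about the algebraic route is precisely what misleads you. You write that inverting \eqref{plethysm2eqn} ``does not preserve Schur-positivity,'' but the paper does exactly that inversion and it works beautifully: by Cadogan's theorem, the plethystic inverse of $\sum_{n\geq 1}h_n$ is controlled by the Lie characteristics $\ell_m = \frac{1}{m}\sum_{d\mid m}\mu(d)p_d^{m/d}$, and Lemma~\ref{cadoganlem} together with a short computation (Proposition~\ref{basicprop}) yields the manifestly Schur-positive expression
\[
\sum_{n\ge 1,j\ge 0}Q_{(n),j}\,t^jz^n=h_1z+\sum_{m\geq 1}\ell_m\Bigl[\sum_{r\geq 2}(t+t^2+\cdots+t^{r-1})\,h_r z^r\Bigr].
\]
This is Schur-positive because $\ell_m$ is Schur-positive and plethysm of Schur-positive functions is Schur-positive; no geometry is needed.

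The second key difference is how unimodality is proved. You want to inherit an $\mathfrak{sl}_2$-action from hard Lefschetz on $H^*(X_n)$; the paper instead builds an explicit $\sg_n$-module $V_{\lambda,j}$ with a basis of (forests of) binary trees whose leaves are \emph{marked sets}, defines the raising operator $\phi:V_{\lambda,j-1}\to V_{\lambda,j}$ that increments a mark by $1$ in all possible ways, and reduces injectivity of $\phi$ (after fixing the underlying unmarked forest) to the classical Proctor--Saks--Sturtevant theorem on raising operators in products of chains. This is a completely elementary, combinatorial substitute for the $\mathfrak{sl}_2$/Lefschetz machinery you were hoping to import, and it is exactly the technical work you anticipate but do not supply. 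In short: your high-level strategy (reduce via plethysm, then handle $\lambda=(n)$) is compatible with the paper, but the decisive ingredients---the Cadogan inversion giving Proposition~\ref{basicprop}, the marked-binary-tree model for $V_{(n),j}$, and the Proctor--Saks--Sturtevant injectivity argument---are all missing from your proposal.
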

\noindent
The proof will be given in Section 2; it involves constructing an 
explicit $\sg_n$-representation $V_{\lambda,j}$ whose Frobenius 
characteristic is $Q_{\lambda,j}$.

We recall some basic permutation statistics.   Let $\sigma \in 
\sg_n$. The excedance number of $\sigma$ is given by
$$\exc(\sigma) :=|\{i \in [n-1] : \sigma(i) > i\}|.$$   The descent 
set of $\sigma$  is
given by $$\Des(\sigma) :=  \{i \in [n-1] : \sigma(i) > 
\sigma(i+1)\}$$ and the descent number and major index 
are$$\des(\sigma) := |\Des(\sigma) | \mbox{  and }
  \maj(\sigma):= \sum_{i\in \Des(\sigma)} i .$$

  The cycle type $(q,p)$-Eulerian polynomial is defined in \cite{sw} by
\begin{equation*}
A_\lambda^{\maj,\des,\exc}(q,p,q^{-1}t):=\sum_{\substack{\sigma\in\sg_n\\\lambda(\sigma)=\lambda}}q^{\maj(\sigma)-\exc(\sigma)}p^{\des(\sigma)}t^{\exc(\sigma)}.
\end{equation*}
This records the joint distribution of the statistics 
$(\maj,\des,\exc)$ over permutations of cycle type $\lambda$. We 
write $a_{\lambda,j}^{\maj',\des}(q,p)$ for the coefficient of $t^j$, 
which is an element of $\N[q,p]$.

The polynomial $a_{\lambda,j}^{\maj',\des}(q,p)$ may be obtained from 
the cycle type Eulerian quasisymmetric functions by a suitable 
specialization.   Explicitly, \cite[Lemma 2.4]{sw} shows that if 
$\lambda$ has the form $(\mu,1^k)$, where $\mu$ is a partition of 
$n-k$ with no parts equal to $1$, then
\begin{equation} \label{alambdaeqn}
a_{\lambda,j}^{\maj',\des}(q,p)=(p;q)_{n+1}\sum_{m\geq 0} 
p^m\sum_{i=0}^k q^{im}\,\mathbf{ps}_m(Q_{(\mu,1^{k-i}),j}),
\end{equation}
where as usual $(p;q)_i$ denotes $(1-p)(1-pq)\cdots(1-pq^{i-1})$, and 
$\mathbf{ps}_m$ is the principal specialization of order $m$. In 
\cite[Theorem 5.13]{sw}, this is used to show that 
$A_\lambda^{\maj,\des,\exc}(q,p,q^{-1}t)$ is $t$-symmetric with 
center of symmetry $\frac{n-k}{2}$, in the sense that
\begin{equation}
a_{\lambda,j}^{\maj',\des}(q,p)=a_{\lambda,n-k-j}^{\maj',\des}(q,p).
\end{equation}

The second result of this paper appeared as \cite[Conjecture 5.14]{sw}.
\begin{thm} \label{pqthm}
The $t$-symmetric polynomial 
$A_\lambda^{\maj,\des,\exc}(q,p,q^{-1}t)$ is $t$-unimodal in the 
sense that 
$$a_{\lambda,j}^{\maj',\des}(q,p)-a_{\lambda,j-1}^{\maj',\des}(q,p)\in\N[q,p]$$ 
for $1\leq j\leq\frac{n-k}{2}$, where $k$ is the multiplicity of $1$ 
in the partition $\lambda$.
\end{thm}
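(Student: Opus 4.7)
The plan is to derive Theorem \ref{pqthm} from the Schur-unimodality half of Theorem \ref{schurthm} by combining the specialization formula \eqref{alambdaeqn} with Gessel's principal-specialization identity. Writing $\lambda=(\mu,1^k)$ with $\mu$ having no parts equal to $1$, and subtracting \eqref{alambdaeqn} at $j$ and $j-1$, I would first obtain
\[
a_{\lambda,j}^{\maj',\des}(q,p)-a_{\lambda,j-1}^{\maj',\des}(q,p)=(p;q)_{n+1}\sum_{m\ge 0}p^m\sum_{i=0}^kq^{im}\,\mathbf{ps}_m(\Delta_i),
\]
where $\Delta_i:=Q_{(\mu,1^{k-i}),j}-Q_{(\mu,1^{k-i}),j-1}$.

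From \eqref{plethysm1eqn} together with the evaluation $\sum_jQ_{(1),j}t^j=h_1$ (the cycle type $(1)$ admits only the identity, with $\exc=0$), I would deduce $Q_{(\mu,1^{k-i}),j}=h_{k-i}\cdot Q_{\mu,j}$, so $\Delta_i=h_{k-i}\,\Gamma_j$ where $\Gamma_j:=Q_{\mu,j}-Q_{\mu,j-1}$. Since $\mu$ has no $1$-parts, Theorem \ref{schurthm} applied to $\mu$ gives Schur-positivity of $\Gamma_j$ for $1\le j\le (n-k)/2$, and hence of each $\Delta_i$. Schur-positivity implies $F$-positivity via the identity $s_\nu=\sum_{T}F_{\Des(T),|\nu|}$ summed over standard Young tableaux $T$ of shape $\nu$, so I may write $\Delta_i=\sum_S a_{S,i}F_{S,n-i}$ with coefficients $a_{S,i}\in\N$.

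Next I invoke Gessel's principal-specialization identity, which after replacing $p$ by $pq^i$ reads $(pq^i;q)_{n-i+1}\sum_m(pq^i)^m F_{S,n-i}(1,q,\ldots,q^m)=q^{\maj(S)}(pq^i)^{|S|}$. Using the factorization $(p;q)_{n+1}/(pq^i;q)_{n-i+1}=(p;q)_i$ and summing over $i$ and $S$ yields
\[
a_{\lambda,j}^{\maj',\des}(q,p)-a_{\lambda,j-1}^{\maj',\des}(q,p)=\sum_{i=0}^k(p;q)_i\sum_S a_{S,i}\,q^{\maj(S)+i|S|}p^{|S|}.
\]
Each inner sum lies in $\N[q,p]$, but the prefactors $(p;q)_i=\prod_{s=0}^{i-1}(1-pq^s)$ have alternating signs.

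The main obstacle is to verify that this alternating sum nevertheless collapses to an element of $\N[q,p]$. I expect the resolution to come from a more refined treatment of the $i$-sum in \eqref{alambdaeqn}. Specifically, the Pieri-type identity $\sum_{i=0}^kq^{im}\binom{k-i+m}{m}_q=h_k(1,q,\ldots,q^m,q^m)$ (an instance of $s_{(k)}(X\cup\{y\})=\sum_{r}y^rs_{(k-r)}(X)$) reinterprets the entire $i$-sum, after applying the plethystic factorization, as $\mathbf{ps}_m(\Gamma_j)\cdot h_k$ evaluated on an alphabet with one repeated letter $q^m$. The hope is to absorb this augmentation into a single Schur-positive symmetric function of degree $n$ to which Gessel's identity applies uniformly with prefactor $(p;q)_{n+1}$, thereby bypassing the troublesome $(p;q)_i$ factors. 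Executing this repackaging cleanly---perhaps by exhibiting a graded $\sg_n$-module derived from the representation $V_{\lambda,j}$ constructed in the proof of Theorem \ref{schurthm} whose graded Frobenius characteristic specializes to $a_{\lambda,j}^{\maj',\des}(q,p)$---is the principal technical hurdle.
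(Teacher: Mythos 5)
Your first half tracks the paper exactly: rewriting \eqref{alambdaeqn} via $Q_{(\mu,1^{k-i}),j}=Q_{\mu,j}h_{k-i}$, taking the difference, invoking Theorem~\ref{schurthm} for Schur-positivity of $\Gamma_j=Q_{\mu,j}-Q_{\mu,j-1}$, and passing to $F$-positivity via the standard tableau expansion of Schur functions. You also correctly identify the real difficulty: after applying Gessel's principal specialization identity, the $(p;q)_{n+1}/(pq^i;q)_{n-i+1}=(p;q)_i$ prefactors introduce alternating signs, so positivity of the inner sums is not by itself enough.

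There is, however, a genuine gap where you propose to resolve those signs. You explicitly leave the key step as a ``hope,'' and the specific mechanism you sketch does not work as stated: the Pieri-type identity rewrites $\sum_{i=0}^k q^{im}\mathbf{ps}_m(\Gamma_j h_{k-i})$ as $\mathbf{ps}_m(\Gamma_j)\cdot h_k(1,q,\ldots,q^m,q^m)$, but the augmented alphabet with a repeated letter $q^m$ is \emph{not} a principal specialization, so there is no degree-$n$ symmetric function $G$ with $\mathbf{ps}_m(G)$ equal to this product, and Gessel's identity cannot be applied ``uniformly with prefactor $(p;q)_{n+1}$'' as you envision. A further structural misstep is expanding $\Delta_i=\Gamma_j h_{k-i}$ directly as $\sum_S a_{S,i}F_{S,n-i}$ with coefficients depending on $i$: this throws away precisely the combinatorial structure needed to control the $i$-sum. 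What the paper does instead is fix a single $S\subseteq[n-k-1]$ (from the $F$-expansion of $\Gamma_j$, which is independent of $i$), expand $F_{S,n-k}h_{k-i}$ via the shuffle rule into $\sum_{\sigma\in\sh(\alpha,\epsilon_{n-k+1}^{n-i})}F_{\Des(\sigma),n-i}$, apply Gessel--Reutenauer, and then prove the purely combinatorial Lemma~\ref{mainlem}: the alternating sum $\sum_{i=0}^k(p;q)_i\sum_{\sigma\in\sh(\alpha,\epsilon_{n-k+1}^{n-i})}(pq^i)^{\des(\sigma)+1}q^{\maj(\sigma)}$ telescopes, via an explicit bijection $\sh\setminus\sh^*\leftrightarrow\biguplus_i\sh^*$ that tracks how $\des$ and $\maj$ change, to the manifestly positive sum $\sum_{i=0}^k\sum_{\sigma\in\sh^*(\alpha,\epsilon_{n-k+1}^{n-i})}(pq^i)^{\des(\sigma)+1}q^{\maj(\sigma)}$ over shuffles that begin with $\alpha_1$. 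This shuffle-telescoping argument (Proposition~\ref{fundprop} together with Lemma~\ref{mainlem}) is the content your proposal is missing.
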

\noindent
The proof will be given in Section 3; it makes use of 
Theorem~\ref{schurthm} and (\ref{alambdaeqn}).
\section{Proof of Theorem \ref{schurthm}}
For any positive integer $n$, we define a symmetric function $\ell_n$ by
\begin{equation}
\ell_n=\frac{1}{n}\sum_{d|n}\mu(d)p_d^{n/d},
\end{equation}
where $\mu(d)$ is the usual M\"obius function. It is well known 
\cite[Ch.~4, Proposition 4]{joyal} that $\ell_n$ is the Frobenius 
characteristic of the Lie representation $\Lie_n$ of $\sg_n$, which 
is by definition the degree-$(1,1,\cdots,1)$ multihomogeneous 
component of the free Lie algebra on $n$ generators. Here and 
subsequently, all representations and other vector spaces are over 
$\C$ (any field of characteristic $0$ would do equally well).

For us, a convenient construction of $\Lie_n$ is as the vector space 
generated by binary trees with leaf set $[n]$, subject to relations 
which correspond to the skew-symmetry and Jacobi identity of the Lie 
bracket. These relations are
\begin{equation} \label{treerel}
\begin{split}
&(T_1 \land T_2) + (T_2 \land T_1) =0\text{ and}\\
&((T_1 \land T_2) \land T_3) +((T_2 \land T_3) \land T_1) +((T_3 
\land T_1) \land T_2)= 0,
\end{split}
\end{equation}
where $A \land B$ denotes the binary tree whose left subtree is $A$ 
and right subtree is $B$, and in both cases the relation applies not 
just to the tree as a whole but to the subtree descending from any 
vertex (it being understood that the other parts of the tree are the 
same in all terms). The $\sg_n$-action is the obvious one by 
permuting the labels of the leaves. It is well known that $\Lie_n$ 
has a basis given by the trees of the form $(\cdots((s_1\land 
s_2)\land s_3)\cdots \land s_n)$ where $s_1,s_2,\cdots,s_n$ is a 
permutation of $[n]$ such that $s_1=1$.

A famous result of Cadogan \cite{cad} is that the plethystic inverse 
of $\sum_{n\geq 1} h_n$ is $\sum_{n\geq 1}(-1)^{n-1}\omega(\ell_n)$. 
A slight variant of this result is the following (compare 
\cite[Ch.~4, Proposition 1]{joyal}).
\begin{lemma} \label{cadoganlem}
We have an equality of symmetric functions:
\[
\sum_{n\geq 0} h_n[\sum_{m\geq 1}\ell_m]=(1-h_1)^{-1}.
\]
\end{lemma}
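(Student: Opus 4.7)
The plan is to expand both sides in the power-sum basis and verify the identity directly, using the defining formula for $\ell_m$ together with the Möbius identity $\sum_{d\mid n}\mu(d)=[n=1]$.

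First, since $\sum_{n\geq 0}h_n=\exp(\sum_{k\geq 1}p_k/k)$ and plethysm is a ring homomorphism in its left argument, one has
\[
\sum_{n\geq 0}h_n[f]=\exp\Bigl(\sum_{k\geq 1}\tfrac{1}{k}\,p_k[f]\Bigr)
\]
for any symmetric function $f$. I would set $f=\sum_{m\geq 1}\ell_m$. Because $p_k[g]$ is obtained from $g$ by the substitution $p_d\mapsto p_{kd}$, feeding this into the defining formula for $\ell_m$ gives
\[
p_k\Bigl[\sum_{m\geq 1}\ell_m\Bigr]=\sum_{m\geq 1}\frac{1}{m}\sum_{d\mid m}\mu(d)\,p_{kd}^{m/d}.
\]

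The next step is a double re-indexing. Writing $m=de$ and summing on $e\geq 1$ first, using $\sum_{e\geq 1}x^e/e=-\log(1-x)$, reduces the previous display to $-\sum_{d\geq 1}\tfrac{\mu(d)}{d}\log(1-p_{kd})$. Dividing by $k$, summing over $k\geq 1$, and collecting terms according to the value of $n=kd$ (so that $\tfrac{1}{k}\cdot\tfrac{1}{d}=\tfrac{1}{n}$), I obtain
\[
\sum_{k\geq 1}\frac{p_k[f]}{k}=-\sum_{n\geq 1}\frac{\log(1-p_n)}{n}\sum_{d\mid n}\mu(d).
\]
By the Möbius identity the inner sum is $1$ when $n=1$ and $0$ otherwise, so the right-hand side collapses to $-\log(1-p_1)=-\log(1-h_1)$. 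Exponentiating proves the lemma.

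The only real obstacle is the bookkeeping in the re-indexing: one must be careful that summing over pairs $(k,d)$ with $kd=n$ fixed really produces a clean factor of $1/n$, so that $\tfrac{1}{n}\sum_{d\mid n}\mu(d)$ appears and the Möbius identity can be applied. Everything else is formal manipulation. Conceptually, of course, this identity is nothing but the Frobenius-characteristic incarnation of the Poincaré--Birkhoff--Witt decomposition of the free associative algebra as the symmetric algebra on the free Lie algebra; but the power-sum computation is faster and entirely self-contained.
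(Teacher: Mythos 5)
Your proof is correct and follows essentially the same route as the paper's: expand $\sum_n h_n$ as $\exp(\sum_k p_k/k)$, push the plethysm onto the power sums, and reduce via the M\"obius identity to $-\log(1-p_1)$. The only cosmetic difference is the order of re-indexing (you sum over $e$ first to produce logarithms and then apply M\"obius on $n=kd$, while the paper collects on $j=id$ and applies M\"obius before summing the geometric-log series); both manipulations are formally identical.
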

\begin{proof}
Using the well-known identity
\begin{equation}
\sum_{n\geq 0}h_n=\exp(\sum_{i\geq 1}\frac{1}{i}p_i),
\end{equation}
the left-hand side of our desired equality becomes
\[
\begin{split}
\exp(\sum_{i\geq 1}\frac{1}{i}p_i[\sum_{m\geq 1}\ell_m])
&=\exp(\sum_{i\geq 1}\frac{1}{i}p_i[\sum_{d,e\geq 1}\frac{1}{de}\mu(d)p_d^e])\\
&=\exp(\sum_{i,d,e\geq 1}\frac{1}{ide}\mu(d)p_{id}^e)\\
&=\exp(\sum_{j,e\geq 1}\sum_{d|j}\frac{1}{je}\mu(d)p_j^e)\\
&=\exp(\sum_{e\geq 1}\frac{1}{e}p_1^e)\\
&=\exp(-\log(1-p_1)),
\end{split}
\]
which equals the right-hand side.
\end{proof}

We deduce a new expression for the symmetric functions $Q_{(n),j}$.
\begin{prop} \label{basicprop}
The symmetric functions $Q_{(n),j}$ have the generating series:
\[
\sum_{n\ge 1,j\ge 0}Q_{(n),j}\,t^jz^n=h_1z+\sum_{m\geq 
1}\ell_m[\sum_{r\geq 2}(t+t^2+\cdots+t^{r-1})\,h_r z^r].
\]
\end{prop}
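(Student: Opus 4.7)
Write $A := \sum_{i\geq 1,\,j\geq 0} Q_{(i),j}\, t^j z^i$ and $B := \frac{(1-t)H(z)}{H(zt)-tH(z)}$. By \eqref{qnjeqn} and \eqref{plethysm2eqn}, we have $\sum_{n\geq 0} h_n[A] = B$. The first step is to observe that $A$ is the unique formal power series in $z$ with no constant term (and with symmetric-function coefficients depending polynomially on $t$) satisfying this equation: comparing coefficients of $z^k$ on both sides, one sees that $[z^k]A$ enters linearly on the left through the term $h_1[A]=A$, while the contributions from $h_n[A]$ with $n\geq 2$ involve only $[z^i]A$ for $i<k$, giving an inductive determination of $A$. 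It therefore suffices to verify that the proposed right-hand side
\[
R \;:=\; h_1 z + L[C], \qquad L := \sum_{m\geq 1}\ell_m, \qquad C := \sum_{r\geq 2}(t+t^2+\cdots+t^{r-1})\,h_r z^r,
\]
also satisfies $\sum_{n\geq 0} h_n[R] = B$.

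To evaluate $\sum_n h_n[R]$, I would use three standard properties of plethysm: associativity, $\sum_n h_n[L[C]] = (\sum_n h_n[L])[C]$; the ``exponential identity'' $\sum_n h_n[X+Y] = (\sum_n h_n[X])(\sum_n h_n[Y])$, which follows from the formula $\sum_n h_n = \exp\sum_{i\geq 1}\frac{1}{i}p_i$ already invoked in the proof of Lemma~\ref{cadoganlem}; and the fact that plethysm is a ring homomorphism in its outer slot, so $(1-h_1)^{-1}[C] = \sum_{k\geq 0}C^k = (1-C)^{-1}$. Combined with the elementary computation $\sum_n h_n[h_1 z] = H(z)$ and Lemma~\ref{cadoganlem} itself, these yield
\[
\sum_{n\geq 0} h_n[R] \;=\; H(z)\cdot (1-C)^{-1}.
\]

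The final step is a direct computation of $1-C$. Using $t+t^2+\cdots+t^{r-1}=(t-t^r)/(1-t)$ and splitting the sum,
\[
C \;=\; \frac{1}{1-t}\Bigl(\,t\sum_{r\geq 2} h_r z^r \;-\; \sum_{r\geq 2} h_r (zt)^r\,\Bigr) \;=\; \frac{tH(z)-H(zt)+(1-t)}{1-t},
\]
whence $1-C = (H(zt)-tH(z))/(1-t)$ and hence $\sum_n h_n[R] = (1-t)H(z)/(H(zt)-tH(z)) = B$, as required.

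The main obstacle, as I see it, is purely bookkeeping: one must consistently use the convention that $z$ is plethystically a degree-one variable (so $p_k[z]=z^k$) and apply the plethystic associative and distributive laws in the correct order. Once those conventions are fixed, the argument is a routine manipulation of formal series in the ring of symmetric functions with the formal parameter $t$, relying only on Lemma~\ref{cadoganlem} and the plethystic identity $\sum_n h_n[h_1 z]=H(z)$.
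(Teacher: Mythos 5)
Your proof is correct and follows essentially the same route as the paper: compute $\sum_{n\ge 0}h_n[\cdot]$ of both sides using the exponential and associativity properties of plethysm together with Lemma~\ref{cadoganlem}, then cancel. The only cosmetic difference is that you justify the cancellation by an inductive uniqueness argument on the $z$-degree, whereas the paper applies the plethystic inverse of $\sum_{n\ge 1}h_n$; these amount to the same thing.
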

\begin{proof}
Let $A$ and $B$ denote the left-hand and right-hand sides of the 
equation. We know from \eqref{qnjeqn} and \eqref{plethysm2eqn} that
\[
\sum_{n\ge 0} h_n[A]=\frac{(1-t)H(z)}{H(zt)-tH(z)}.
\]
Using the well-known fact
\begin{equation}
\sum_{n\ge 0}  h_n[X+Y]=(\sum_{n\ge 0} h_n[X])(\sum_{n\ge0} h_n[Y])
\end{equation}
as well as Lemma \ref{cadoganlem}, we calculate
\[
\begin{split}
\sum_{n\ge 0} h_n[B]&=(\sum_{n\ge0} h_n[h_1z])\sum_{n\ge0} 
h_n[\sum_{m\geq 1}\ell_m[\sum_{r\geq 2}(t+t^2+\cdots+t^{r-1})\,h_r 
z^r]]\\
&= (\sum_{n\ge 0} h_n\,z^n) (\sum_{n\ge 0} h_n[\sum_{m\ge 1} \ell_m] 
)[\sum_{r\geq 2}(t+t^2+\cdots+t^{r-1})\,h_r z^r] \\
&=(\sum_{n\ge 0} h_n\,z^n)(1-\sum_{r\geq 
2}(t+t^2+\cdots+t^{r-1})\,h_r z^r)^{-1}\\
&=H(z)(1+\sum_{r\geq 1}\frac{t^r-t}{1-t}\,h_r z^r)^{-1}\\
&=H(z)\left(\frac{H(zt)-tH(z)}{1-t}\right)^{-1}\\
&=\frac{(1-t)H(z)}{H(zt)-tH(z)}.
\end{split}
\]
We conclude that $\sum_{n\ge 1} h_n [A] = \sum_{n \ge 1} h_n[B]$.  By 
applying the plethystic inverse of $ \sum_{n\ge 1} h_n$ to both sides 
of this equation we obtain $A=B$ as claimed.
\end{proof}

Proposition \ref{basicprop} allows us to construct an 
$\sg_n$-representation $V_{(n),j}$ whose Frobenius characteristic is 
$Q_{(n),j}$. We define a {\em marked set} to be a finite set $S$ such 
that $|S|\geq2$, together with an integer $j \in [|S|-1]$ called the 
\emph{mark} (cf.\ \cite{stembridge}). For $n\geq 2$, let $V_{(n),j}$ 
be the vector space generated by binary trees whose leaves are marked 
sets which form a partition of $[n]$ (when the marks are ignored) and 
whose marks add up to $j$, subject to the relations \eqref{treerel}. 
The $\sg_n$-action is by permuting the letters in the leaves.

\begin{exam}
$V_{(6),3}$ is spanned by the following trees and their 
$\sg_6$-translates, where the superscript on a leaf indicates the 
mark:
\[
\begin{split}
&\{1,2,3,4,5,6\}^{(3)},\\
&(\{1,2,3,4\}^{(2)}\land\{5,6\}^{(1)}),\\
&(\{1,2,3\}^{(2)}\land\{4,5,6\}^{(1)}),\\
&((\{1,2\}^{(1)}\land\{3,4\}^{(1)})\land\{5,6\}^{(1)}).
\end{split}
\]
The resulting expression for $V_{(6),3}$ as a representation of $\sg_6$ is
\[
\bfone\oplus\Ind_{\sg_4\times\sg_2}^{\sg_6}(\bfone)\oplus\Ind_{\sg_3\times\sg_3}^{\sg_6}(\bfone)\oplus\Ind_{\sg_2\wr\sg_3}^{\sg_6}(\Lie_3),
\]
where $\bfone$ denotes the trivial representation of a group, and 
$\Lie_3$ is regarded as a representation of the wreath product 
$\sg_2\wr\sg_3$ via the natural homomorphism to $\sg_3$.
\end{exam}

\begin{prop} \label{vnjprop}
For $n\geq 2$ and any $j$, $Q_{(n),j}=\ch V_{(n),j}$.
\end{prop}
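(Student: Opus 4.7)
The plan is to compare Frobenius characteristics using Proposition \ref{basicprop}. Since $Q_{(1),0} = h_1$ accounts separately for the $h_1 z$ term on the right-hand side of Proposition \ref{basicprop}, it suffices to prove
\[
\sum_{n \ge 2,\, j \ge 0} (\ch V_{(n),j})\, t^j z^n \;=\; \sum_{m \ge 1} \ell_m[X],
\]
where $X := \sum_{r \ge 2} (t + t^2 + \cdots + t^{r-1})\, h_r z^r$.

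The first step is to recognize $X$ as the Frobenius characteristic of a natural species: the graded species of marked sets. A marked set whose underlying set has size $r \ge 2$ carries the trivial $\sg_r$-action (contributing $h_r$), and its mark $j' \in [r-1]$ is recorded by the weight $t^{j'}$; summing over the allowed marks produces the factor $(t + t^2 + \cdots + t^{r-1})\,h_r z^r$.

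Next I would filter $V_{(n),j}$ by the number $m$ of leaves of the underlying binary tree, obtaining a decomposition $V_{(n),j} = \bigoplus_{m \ge 1} V_{(n),j}^{(m)}$ as $\sg_n$-representations. The central structural claim is that, for each $m$,
\[
\sum_{n,\, j} (\ch V_{(n),j}^{(m)})\, t^j z^n \;=\; \ell_m[X].
\]
This is an instance of the standard dictionary between operadic composition of species and plethysm of Frobenius characteristics: by definition, $V_{(n),j}^{(m)}$ is spanned by binary trees modulo the Lie relations \eqref{treerel}, with $m$ leaves forming a partition of $[n]$ into marked blocks of size $\ge 2$ whose marks sum to $j$, so it is precisely the $(n,j)$-multilinear piece of the free Lie algebra built on the species of marked sets. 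The $\sg_m$-action implicit in the Lie operad matches the $\sg_m$-symmetrization inside the plethystic substitution, while the internal trivial $\sg_r$-action on each block matches the $h_r$ factor inside $X$.

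The main obstacle is making this operadic identification rigorous without invoking heavy machinery. I would do so by using the basis of $\Lie_m$ recalled in the excerpt — the left-combed trees $(\cdots((s_1 \land s_2) \land s_3) \cdots \land s_m)$ with $s_1 = 1$ — to build an explicit basis of $V_{(n),j}^{(m)}$: choose a partition of $[n]$ into $m$ marked blocks with marks summing to $j$, designate the block containing $1$ as the leftmost leaf, and then use the above left-combed shape to order the remaining blocks. A direct character computation shows that the resulting $\sg_n$-representation has Frobenius characteristic equal to the $z^n t^j$ coefficient of $\ell_m[X]$, either from the explicit formula $\ell_m = \frac{1}{m} \sum_{d \mid m} \mu(d) p_d^{m/d}$ combined with the definition of plethysm on power sums, or by citing \cite{joyal}. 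Summing over $m \ge 1$ yields the required identity and completes the proof.
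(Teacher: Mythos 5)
Your proposal follows essentially the same route as the paper: both recognize $X$ as the character of a graded species of marked sets, interpret $V_{(n),j}$ as the partitional (species) composition of the Lie species with this marked-set species, and invoke the standard correspondence between species composition and plethysm of Frobenius characteristics to match against Proposition~\ref{basicprop}. The only differences are cosmetic — the paper works with $\Lie=\bigoplus_m\Lie_m$ and $\Lie\circ W$ all at once and cites a graded version of Joyal's theorem (\cite[Corollary~7.6]{hend}), whereas you filter by the number of leaves $m$ first and then either cite \cite{joyal} or sketch a hands-on character computation via the left-combed basis, which amounts to reproving that correspondence in this instance.
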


\begin{proof}
We want to apply to Proposition \ref{basicprop} the 
representation-theoretic interpretation of plethysm given by Joyal in 
\cite{joyal}. If we take the definition of $\Lie_n$ in terms of 
binary trees and replace the set $[n]$ with an arbitrary finite set 
$I$, we obtain a vector space $\Lie(I)$. This defines a functor 
$\Lie$ from the category of finite sets, with bijections as the 
morphisms, to the category of vector spaces; such a functor is called 
an $\sg$-module (or a tensor species, in the terminology of 
\cite[Ch.~4]{joyal}). The character $\ch(\Lie)$ is by definition 
$\sum_{m\geq 1}\ch\Lie_m=\sum_{m\geq 1}\ell_m$.

We also define a graded $\sg$-module $W$ (that is, a functor from the 
category of finite sets with bijections to the category of 
$\N$-graded vector spaces) by letting $W(I)$ be the graded vector 
space with
\[
W(I)_a=\begin{cases}
\C,&\text{ if $1\leq a<|I|$,}\\
0,&\text{ otherwise,}
\end{cases}
\]
where the grading-preserving linear map $W(I)\to W(J)$ induced by a 
bijection $I\to J$ is the trivial one using only the identity map 
$\C\to\C$. The character $\ch_t(W)$, where we use the indeterminate 
$t$ to keep track of the grading in the obvious way, is clearly 
$\sum_{r\geq 2}(t+t^2+\cdots+t^{r-1})h_r$.

We can then define a graded $\sg$-module $\Lie\circ W$, the 
partitional composition of $\Lie$ and $W$, by
\[
(\Lie\circ 
W)(I):=\bigoplus_{\pi\in\Pi(I)}\Lie(\pi)\otimes\bigotimes_{J\in\pi}W(J),
\]
where $\Pi(I)$ denotes the set of partitions of the set $I$, and we 
identify a partition $\pi$ with its set of blocks. The grading on the 
tensor product of graded vector spaces is as usual, with $\Lie(\pi)$ 
considered as being homogeneous of degree zero. By \cite[Corollary 
7.6]{hend}, which is an extension of Joyal's result \cite[4.4]{joyal} 
to the graded setting, this operation of partitional composition 
corresponds to plethysm of the characters.  So we have
\begin{equation} \label{liepletheq}
\ch_t(\Lie\circ W)=\sum_{m\geq 1}\ell_m[\sum_{r\geq 
2}(t+t^2+\cdots+t^{r-1})\,h_r].
\end{equation}
Comparing this equation with Proposition 
\ref{basicprop}, we see that for $n\geq 2$, $Q_{(n),j}$ is the 
Frobenius characteristic of the representation of $\sg_n$ on the 
degree-$j$ homogeneous component of $(\Lie\circ W)[n]$. It is easy to 
see that this is equivalent to the representation $V_{(n),j}$ defined 
above.
\end{proof}

\begin{rmk} Equation~(\ref{liepletheq})  can also be obtained from 
an easy  modification of  \cite[Theorem~5.5]{w}.
\end{rmk}
\begin{rmk}
Proposition \ref{vnjprop} is analogous to Stembridge's result 
\cite[Proposition 4.1]{stembridge}, which realizes $Q_{n,j}$ as the 
Frobenius characteristic of the permutation representation of $\sg_n$ 
on what he calls codes of length $n$ and index $j$. In our 
terminology, these codes are the (possibly empty) sequences $(S_1,S_2,\cdots,S_m)$ 
of marked sets, whose underlying sets are disjoint subsets of $[n]$, 
and whose marks add up to $j$. So the marked sets appear in both 
contexts, but his result for $Q_{n,j}$ uses a representation with a 
basis consisting of sequences of marked sets, whereas our result for 
$Q_{(n),j}$ uses a representation spanned by binary trees of marked 
sets, which are subject to linear relations. The difference springs 
from the fact that the generating function \eqref{qnjeqn} for 
$Q_{n,j}$ effectively has $h_1^m$ in place of the $\ell_m$ in 
Proposition \ref{basicprop}. Since $Q_{(n),j}$ is not $h$-positive 
(see \cite[(5.4)]{sw}), it cannot be the Frobenius characteristic of 
a permutation representation.
\end{rmk}

We now define an $\sg_n$-representation $V_{\lambda,j}$ for any 
partition $\lambda\vdash n$. This is the vector space generated by 
forests $\{T_1,\cdots,T_m\}$, where each $T_i$ is either a binary 
tree whose leaves are marked sets, or a single-vertex tree whose leaf 
is a singleton set with no mark. There are further conditions: for 
each tree $T_i$, the leaves (ignoring the marks) must form a 
partition of a set $L_i$, and in turn, $L_1,\cdots,L_m$ must form a 
partition of $[n]$; the sizes $|L_1|,\cdots,|L_m|$ must be the parts 
of the partition $\lambda$, in some order; and the sum of the marks 
must be $j$. These forests are once again subject to the relations 
\eqref{treerel}. Note that if $n\geq 2$ and $\lambda=(n)$, this 
agrees with our earlier definition of $V_{(n),j}$.

\begin{exam}
$V_{(4,3,3,1),4}$ is spanned by the following three forests and their 
$\sg_{11}$-translates:
\[
\begin{split}
\{1,2,3,4\}^{(1)}\qquad &\{5,6,7\}^{(2)}\qquad \{8,9,10\}^{(1)}\qquad \{11\},\\
\{1,2,3,4\}^{(2)}\qquad &\{5,6,7\}^{(1)}\qquad \{8,9,10\}^{(1)}\qquad \{11\},\\
(\{1,2\}^{(1)}\land\{3,4\}^{(1)})\qquad &\{5,6,7\}^{(1)}\qquad 
\{8,9,10\}^{(1)}\qquad \{11\}.
\end{split}
\]
\end{exam}

\begin{prop} \label{vlambdajprop}
For any $\lambda$ and $j$, $Q_{\lambda,j}=\ch V_{\lambda,j}$.
\end{prop}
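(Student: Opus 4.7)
The plan is to deduce Proposition \ref{vlambdajprop} from Proposition \ref{vnjprop} and the plethysm formula \eqref{plethysm1eqn}, using Joyal's representation-theoretic interpretation of plethysm exactly as in the proof of Proposition \ref{vnjprop}.

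First I would observe that a generating forest $F=\{T_1,\ldots,T_m\}$ of $V_{\lambda,j}$ determines a set partition $\pi(F)=\{L_1,\ldots,L_m\}$ of $[n]$ whose block-size multiset is $\lambda$, and each tree $T_i$ is either a generator of $V_{(|L_i|),j_i}$ (when $|L_i|\geq 2$) or the unique single-vertex tree (when $|L_i|=1$), for some marks $j_i\geq 0$ summing to $j$. Since the relations \eqref{treerel} act locally on individual trees, this yields a canonical $\sg_n$-equivariant decomposition
\begin{equation*}
V_{\lambda,j}=\bigoplus_{\pi=\{L_1,\ldots,L_m\}}\;\bigoplus_{j_1+\cdots+j_m=j}\;\bigotimes_{i=1}^{m}V_{(|L_i|),j_i},
\end{equation*}
where $\pi$ ranges over set partitions of $[n]$ of type $\lambda$, and with the convention that $V_{(1),0}=\C$ with trivial $\sg_1$-action and $V_{(1),j}=0$ for $j>0$ (consistent with $Q_{(1),0}=h_1$).

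Next, for each integer $i\geq 1$, I would package the spaces $V_{(i),j}$ into a graded $\sg$-module $U^{(i)}$ supported on finite sets of cardinality $i$, and invoke the graded partitional-composition/plethysm correspondence of \cite[Corollary 7.6]{hend}. The decomposition above realizes $\bigoplus_j V_{\lambda,j}$ as the value at $[n]$ of the partitional composition in which each block of size $i$ is decorated by an element of $U^{(i)}$ and the $m_i$ blocks of fixed size $i$ are permuted symmetrically, yielding a plethystic factor $h_{m_i}[\ch_t U^{(i)}]$ for each $i$. Hence the graded Frobenius characteristic of $\bigoplus_j V_{\lambda,j}$ equals $\prod_i h_{m_i}[\sum_j Q_{(i),j}\,t^j]$, using Proposition \ref{vnjprop} for $i\geq 2$ and $\ch_t U^{(1)}=h_1$ for $i=1$. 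By \eqref{plethysm1eqn} this coincides with $\sum_j Q_{\lambda,j}\,t^j$, so comparing coefficients of $t^j$ gives the result.

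The one delicate point is verifying that the unordered multiset of blocks of each fixed size $i$, together with their tree decorations, contributes the plethystic operator $h_{m_i}[\cdot]$ rather than some other symmetric-function operator, and that the relations \eqref{treerel} localize correctly. Both facts are essentially formal: the former is Joyal's identity in the graded setting (already invoked in the proof of Proposition \ref{vnjprop}), and the latter follows because \eqref{treerel} only involves a single tree at a time, so it is preserved under the tensor-product decomposition indexed by $\pi$. Once this bookkeeping is arranged, the proposition reduces to the character identity \eqref{plethysm1eqn}.
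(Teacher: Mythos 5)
Your proof is correct and follows exactly the route the paper indicates (the paper's own proof is a one-line pointer to reinterpret \eqref{plethysm1eqn} via Joyal's plethysm/partitional-composition correspondence, using Proposition~\ref{vnjprop} and $Q_{(1),0}=h_1$); you have simply carried out that plan in full detail, including the correct treatment of singleton blocks as $V_{(1),0}=\C$.
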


\begin{proof}
This follows by interpreting \eqref{plethysm1eqn} along the lines of 
the proof of Proposition \ref{vnjprop}, using the result of 
Proposition \ref{vnjprop} and the fact that $Q_{(1),0}=h_1$.
\end{proof}

 From this description of $Q_{\lambda,j}$, Schur-positivity is 
immediate. We can also deduce the stronger Schur-unimodality 
statement of Theorem \ref{schurthm}.
\begin{proof}[Proof of Theorem \ref{schurthm}]
For $1\leq j \leq \frac{n-k}{2}$, define a linear map 
$\phi:V_{\lambda,j-1}\to V_{\lambda,j}$ which takes a forest 
$F=\{T_1,\cdots,T_m\}$ to the sum of all forests obtained from $F$ by 
adding $1$ to the mark of one of the marked sets (for this to give an 
allowable forest, the original mark must be at most the size of its 
set minus $2$). It is clear that $\phi$ is still well-defined when 
one takes \eqref{treerel} into account, and that $\phi$ commutes with 
the action of $\sg_n$. By Proposition \ref{vlambdajprop}, in order to 
prove that $Q_{\lambda,j}-Q_{\lambda,j-1}$ is Schur-positive, we need 
only show that $\phi$ is injective (since then 
$Q_{\lambda,j}-Q_{\lambda,j-1}$ is the Frobenius characteristic of 
the cokernel of $\phi$).

Now there is some collection $\mathcal{F}$ of unmarked forests, 
depending on $\lambda$ but not on $j$, such that the marked forests 
as defined above whose underlying unmarked forest lies in 
$\mathcal{F}$ form a basis of $V_{\lambda,j}$. For example, if 
$\lambda=(n)$, we can take $\mathcal{F}$ to consist of all binary 
trees of the form $(\cdots((S_1 \land S_2) \land S_3) \cdots \land 
S_t)$ where $S_1,S_2,\cdots,S_t$ form a partition of $[n]$ and $1\in 
S_1$. Since $\phi$ only changes the marking, it is enough to prove 
the injectivity when we have fixed the underlying unmarked forest to 
be some element $F$ of $\mathcal{F}$.

We are now in a familiar situation. We have a collection of disjoint 
sets $A_1,A_2,\cdots,A_s$ (the nonsingleton leaves of  $F$) such that 
$|A_i|\geq 2$ for all $i$ and $|A_1|+|A_2|+\cdots+|A_s|=n-k$. We are 
considering a vector space $V$ with basis 
$[|A_1|-1]\times\cdots\times[|A_s|-1]$, to which we give a grading 
$V=\bigoplus_j V_j$ by the rule that $(b_1,\cdots,b_s)\in 
V_{b_1+\cdots+b_s}$ for any $b_i\in[|A_i|-1]$. We must show that for 
any $j$ such that $1\leq j\leq \frac{n-k}{2}$, the linear map 
$\phi:V_{j-1}\to V_j$ defined by
\[
\phi(b_1,\cdots,b_s)=\sum_{\substack{1\leq i\leq 
s\\b_i\leq|A_i|-2}}(b_1,\cdots,b_i+1,\cdots,b_s)
\]
is injective. This is a well-known fact, a special case of a far more 
general result on raising operators in posets \cite{pss}.
\end{proof}
\section{Proof of Theorem \ref{pqthm}}

The $p=1$ case of Theorem \ref{pqthm} follows immediately from 
Theorem~\ref{schurthm} and the observation from \cite[eq.~(2.13)]{sw} 
that $a^{\maj^\prime,\des}_{\lambda,j}(q,1) = (q;q)_n 
\mathbf{ps}(Q_{\lambda,j})$, where $\mathbf{ps}$ denotes the stable 
principal specialization (see \cite[Lemma 5.2]{sw}).  The proof for 
general $p$ makes use of the (nonstable) principal specialization as 
in (\ref{alambdaeqn}) and  is much more involved.

For permutations $\alpha \in \sg_{S}$ and $\beta \in \sg_T$ on 
disjoint sets $S,T$, let $\sh(\alpha,\beta)$ denote the set of 
shuffles of $\alpha$ and $\beta$.  That is,
$$\sh(\alpha,\beta) := \{\sigma \in \sg_{S\cup T} : \alpha \mbox{ and 
} \beta \mbox{ are subwords of $\sigma$} \}.$$
We define
$$\sh^*(\alpha,\beta):= \{\sigma \in \sh(\alpha,\beta): \sigma_1= \alpha_1\}.$$
Some care is needed with this definition in the case that $S$ is 
empty, when $\alpha$ is the empty word $\varnothing$ and $\alpha_1$ 
is not defined. We have $\sh(\varnothing,\beta)=\{\beta\}$, and we 
declare that $\sh^*(\varnothing,\beta)$ is empty unless 
$\beta=\varnothing$ also, in which case we set 
$\sh^*(\varnothing,\varnothing)=\{\varnothing\}$.

For $i,j\in\N$ with $i\leq j+1$, let $\epsilon_i^j$ denote the word 
$i,i+1,\cdots,j$ (which means the empty word $\varnothing$ if 
$i=j+1$).

\begin{lemma} \label{mainlem} Let $m,k,r\in\N$ with $r\leq k$.
Then for all $\alpha \in \sg_m$,
\begin{eqnarray*}
\sum_{i=r}^k (pq^r;q)_{i-r} & & \hspace{-.3in} \sum_{\sigma \in 
\sh(\alpha,\epsilon_{m+1}^{m+k-i} )}  (pq^i)^{\des(\sigma) +1} 
q^{\maj(\sigma)}  \\
&=& \nonumber \sum_{i=r}^k \sum_{\sigma \in 
\sh^*(\alpha,\epsilon_{m+1}^{m+k-i} )} (pq^i)^{\des(\sigma) +1} 
q^{\maj(\sigma)}.
\end{eqnarray*}
\end{lemma}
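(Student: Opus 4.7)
The plan is to decompose each $\sigma\in\sh(\alpha,\epsilon_{m+1}^{m+k-i})$ according to the length of its maximal initial run of $\epsilon$-letters, and then collapse the resulting double sum using an elementary finite $q$-series identity. Assume $m\geq 1$, so that $\alpha_1$ is defined; the case $m=0$ is essentially the Pochhammer identity itself and can be handled separately.

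First, I would establish the decomposition: each $\sigma\in\sh(\alpha,\epsilon_{m+1}^{m+k-i})$ factors uniquely as
$\sigma=(m+1,m+2,\cdots,m+j)\cdot\tau$ for some $j\in\{0,\cdots,k-i\}$ and some $\tau\in\sh^*(\alpha,\epsilon_{m+j+1}^{m+k-i})$. The order-preserving relabeling $m+j+\ell\mapsto m+\ell$ identifies $\tau$ with an element of $\sh^*(\alpha,\epsilon_{m+1}^{m+k-i-j})$ of the same $\des$ and $\maj$. A direct check shows that for $j\geq 1$ the prepended increasing prefix introduces exactly one new descent at position $j$ (because $m+j>\alpha_1$) and shifts every descent of $\tau$ rightwards by $j$, so that $\des(\sigma)=\des(\tau)+1$ and $\maj(\sigma)=\maj(\tau)+j(\des(\tau)+1)$; for $j=0$ both statistics are unchanged.

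Substituting this into the LHS and separating the $j=0$ and $j\geq 1$ contributions, the $j=0$ part is $\sum_{i=r}^k(pq^r;q)_{i-r}\,S_i$, where
$S_i:=\sum_{\tau\in\sh^*(\alpha,\epsilon_{m+1}^{m+k-i})}(pq^i)^{\des(\tau)+1}q^{\maj(\tau)}$
is the $i$-summand of the RHS. In the $j\geq 1$ part, setting $i':=i+j$ causes the weight $(pq^i)^{\des(\tau)+2}q^{j(\des(\tau)+1)+\maj(\tau)}$ to factor as $(pq^{i'})^{\des(\tau)+1}q^{\maj(\tau)}\cdot pq^i$, so that part rewrites as
\[
\sum_{i'=r+1}^{k}S_{i'}\cdot p\sum_{i=r}^{i'-1}(pq^r;q)_{i-r}\,q^i.
\]
The elementary identity
$(a;q)_N+a\sum_{\ell=0}^{N-1}(a;q)_\ell\,q^\ell=1$,
proved by a one-line induction on $N$ using $(a;q)_N=(a;q)_{N-1}(1-aq^{N-1})$, applied with $a=pq^r$ and $N=i'-r$, turns the inner $q$-sum into $1-(pq^r;q)_{i'-r}$. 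Combining with the $j=0$ piece, the coefficients of $S_{i'}$ telescope to $1$ for every $i'\in\{r,\cdots,k\}$, and what remains is exactly the RHS.

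The main obstacle is the statistic bookkeeping in the decomposition step: one must verify carefully that prepending $(m+1,\cdots,m+j)$ introduces precisely one new descent and a uniform rightward shift of the descents of $\tau$, because this is exactly what makes the weight factor in the clean form needed after the change of variables. After that, the remaining work is a rearrangement of a finite double sum, finished off by the $q$-Pochhammer identity above.
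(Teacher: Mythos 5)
Your proof is correct. The core combinatorial step — decomposing a shuffle $\sigma\in\sh(\alpha,\epsilon_{m+1}^{m+k-i})$ by stripping off the maximal initial run of large letters, with the resulting changes $\des(\sigma)=\des(\tau)+1$ and $\maj(\sigma)=\maj(\tau)+j(\des(\tau)+1)$ — is exactly the bijection the paper uses. The difference is organizational: the paper proves the lemma by induction on $k-r$, peeling off only the $i=r$ term at each step and applying the bijection just to $\sh(\alpha,\epsilon_{m+1}^{m+k-r})\setminus\sh^*(\alpha,\epsilon_{m+1}^{m+k-r})$, whereas you apply the decomposition uniformly to every $i$, change variables $i'=i+j$, and collapse the double sum in one stroke via the telescoping identity $(a;q)_N + a\sum_{\ell=0}^{N-1}(a;q)_\ell q^\ell = 1$. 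That identity is precisely the ``unrolled'' form of the factorization $(pq^r;q)_{i-r}=(1-pq^r)(pq^{r+1};q)_{i-r-1}$ that drives the paper's inductive step, so the two arguments are mathematically equivalent, but yours is non-inductive and arguably more transparent about why the Pochhammer factor appears on the left side: it is exactly the generating polynomial for the allowed prefix lengths. One small caution: you are right to set aside $m=0$ (so that $\alpha_1$ exists and $m+j>\alpha_1$ always gives a descent); the paper's inductive proof has the same implicit restriction, and for the application in Proposition \ref{fundprop} one has $m=n-k\ge 1$ whenever the statement is non-vacuous, so nothing is lost.
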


\begin{proof} We use induction on $k-r$.  The case $r=k$ is trivial, 
because both sides have only one term, namely 
$(pq^k)^{\des(\alpha)+1}q^{\maj(\alpha)}$.

Now suppose $r<k$, and that we know the result when $r$ is replaced 
by $r+1$. Then the left-hand side of our desired equation equals
\[
\begin{split}&\sum_{\sigma \in \sh(\alpha,\epsilon_{m+1}^{m+k-r} )} 
(pq^r)^{\des(\sigma) +1} q^{\maj(\sigma)} \\
&\quad + (1-pq^r)  \sum_{i=r+1}^k (pq^{r+1};q)_{i-r-1}  \sum_{\sigma 
\in \sh(\alpha,\epsilon_{m+1}^{m+k-i} )}  (pq^i)^{\des(\sigma) +1} 
q^{\maj(\sigma)} \\
&=\sum_{\sigma \in \sh(\alpha,\epsilon_{m+1}^{m+k-r} )} 
(pq^r)^{\des(\sigma) +1} q^{\maj(\sigma)} \\
&\quad + (1-pq^r)  \sum_{i=r+1}^k  \sum_{\sigma \in 
\sh^*(\alpha,\epsilon_{m+1}^{m+k-i} )}  (pq^i)^{\des(\sigma) +1} 
q^{\maj(\sigma)}.
\end{split}
\]
To complete the proof we need only show that
\begin{equation}\label{stareq}
\begin{split}
&\sum_{\tau \in \sh(\alpha,\epsilon_{m+1}^{m+k-r})\setminus 
\sh^*(\alpha,\epsilon_{m+1}^{m+k-r})}(pq^r)^{\des(\tau) +1} 
q^{\maj(\tau)}\\
&\qquad = pq^r \sum_{i=r+1}^k  \sum_{\sigma \in 
\sh^*(\alpha,\epsilon_{m+1}^{m+k-i} )}  (pq^i)^{\des(\sigma) +1} 
q^{\maj(\sigma)}.
\end{split}
\end{equation}
Now every $\tau\in\sh(\alpha,\epsilon_{m+1}^{m+k-r})\setminus 
\sh^*(\alpha,\epsilon_{m+1}^{m+k-r})$ can be written uniquely in the 
form $\epsilon_{m+1}^{m+i-r}\sigma'$ where $r<i\leq k$ and 
$\sigma'\in\sh^*(\alpha,\epsilon_{m+i-r+1}^{m+k-r})$. Subtracting 
$i-r$ from every letter of $\sigma'$ which exceeds $m$, we obtain an 
element $\sigma\in\sh^*(\alpha,\epsilon_{m+1}^{m+k-i})$. This gives a 
bijection
\[
\begin{split}
\sh(\alpha,\epsilon_{m+1}^{m+k-r})\setminus 
\sh^*(\alpha,\epsilon_{m+1}^{m+k-r})
&\leftrightarrow
\biguplus_{i=r+1}^k \sh^*(\alpha,\epsilon_{m+1}^{m+k-i})\\
\tau&\mapsto\sigma.
\end{split}
\]
It is easy to see that
\[
\begin{split}
\des(\tau) &= \des(\sigma) + 1,\\
\maj(\tau) &= \maj(\sigma) + (i-r)( \des (\sigma) +1).
\end{split}
\]
We thus have
\[
\begin{split}
&\sum_{\tau \in \sh(\alpha,\epsilon_{m+1}^{m+k-r} ) \setminus 
\sh^*(\alpha,\epsilon_{m+1}^{m+k-r} )} (pq^r)^{\des(\tau) +1} 
q^{\maj(\tau)}\\
&=\sum_{i=r+1}^k  \sum_{\sigma \in 
\sh^*(\alpha,\epsilon_{m+1}^{m+k-i} )} (pq^r)^{\des(\sigma)+2} 
q^{\maj(\sigma)+(i-r)(\des(\sigma) +1)} \\
&=pq^r \sum_{i=r+1}^k  \sum_{\sigma \in 
\sh^*(\alpha,\epsilon_{m+1}^{m+k-i} )}  (pq^i)^{\des(\sigma) +1} 
q^{\maj(\sigma)},
\end{split}
\]
which establishes \eqref{stareq}.
\end{proof}

We deduce a result about the principal specialization of order $m$.

\begin{prop} \label{fundprop} Let $k,n\in\N$ with $k\leq n$. For any 
subset $S$ of $[n-k-1]$,
\[
(p;q)_{n+1} \sum_{m\geq 0}p^m \sum_{i=0}^{k} q^{im} 
\mathbf{ps}_m(F_{S,n-k}h_{k-i})
\in\N[q,p].
\]
More precisely, this expression equals
\[
\sum_{i=0}^k \sum_{\sigma \in \sh^*(\alpha, \epsilon_{n-k+1}^{n-i})} 
(pq^i)^{\des(\sigma)+1} q^{\maj(\sigma)},
\]
where $\alpha$ is any fixed permutation in  $\sg_{n-k}$ with descent set $S$.
\end{prop}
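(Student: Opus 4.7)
The plan is to transform the left-hand side into a sum over shuffles weighted by $(p;q)_{i}$-factors, at which point Lemma~\ref{mainlem} applies directly. Fix any $\alpha\in\sg_{n-k}$ with $\Des(\alpha)=S$. Because $h_{k-i}=F_{\emptyset,k-i}$ is the fundamental quasisymmetric function associated to the increasing word $\epsilon_{n-k+1}^{n-i}$, the shuffle formula for the product of two fundamental quasisymmetric functions yields
\[
F_{S,n-k}\,h_{k-i}=\sum_{\sigma\in\sh(\alpha,\epsilon_{n-k+1}^{n-i})}F_{\Des(\sigma),n-i}.
\]

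Next I would apply the standard $P$-partition identity
\[
\sum_{m\ge 0}p^{m}\mathbf{ps}_{m}(F_{\Des(\sigma),N})=\frac{p^{\des(\sigma)+1}q^{\maj(\sigma)}}{(p;q)_{N+1}},
\]
where the degenerate case $N=0$ (which arises only when $\alpha=\varnothing$ and $i=k=n$) is handled by the conventions embedded in the definition of $\sh^{*}(\varnothing,\varnothing)$. Replacing $p$ by $pq^{i}$ absorbs the factor $q^{im}$ appearing in the statement, and the telescoping identity $(p;q)_{n+1}/(pq^{i};q)_{n-i+1}=(p;q)_{i}$ then produces
\[
(p;q)_{n+1}\sum_{m\ge 0}p^{m}q^{im}\mathbf{ps}_{m}(F_{S,n-k}\,h_{k-i})=(p;q)_{i}\sum_{\sigma\in\sh(\alpha,\epsilon_{n-k+1}^{n-i})}(pq^{i})^{\des(\sigma)+1}q^{\maj(\sigma)}.
\]

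Summing this identity over $i=0,1,\ldots,k$ gives precisely the left-hand side of Lemma~\ref{mainlem} taken with $r=0$ and the lemma's parameter $m$ set equal to $n-k$. Invoking the lemma rewrites the expression as $\sum_{i=0}^{k}\sum_{\sigma\in\sh^{*}(\alpha,\epsilon_{n-k+1}^{n-i})}(pq^{i})^{\des(\sigma)+1}q^{\maj(\sigma)}$, which is the explicit formula asserted in the proposition and is visibly in $\N[q,p]$. The main combinatorial obstacle, namely the passage from arbitrary shuffles to shuffles beginning with $\alpha_{1}$, has already been dispatched in the inductive proof of Lemma~\ref{mainlem}; the remaining steps here are purely formal manipulations of quasisymmetric function identities together with a routine boundary check when $n-k=0$.
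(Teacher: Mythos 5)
Your proof is correct and follows essentially the same path as the paper's: expand $F_{S,n-k}h_{k-i}$ via the shuffle formula, apply the $P$-partition specialization identity (the paper cites Gessel--Reutenauer for this step) with $p$ replaced by $pq^i$, telescope $(p;q)_{n+1}/(pq^i;q)_{n-i+1}=(p;q)_i$, and then invoke Lemma~\ref{mainlem} with $r=0$ and $m=n-k$. No meaningful difference in strategy or execution.
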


\begin{proof}
Let $\alpha\in\sg_{n-k}$ have descent set $S$. Note that 
$h_{k-i}=F_{\emptyset,k-i}$. As a special case of the general rule 
for multiplying fundamental quasisymmetric functions (see 
\cite[Exercise 7.93]{stanley2}), we have
\begin{equation}
F_{S,n-k}h_{k-i}=\sum_{\sigma \in \sh(\alpha, 
\epsilon_{n-k+1}^{n-i})} F_{\Des(\sigma),n-i}.
\end{equation}
Hence
\[
\begin{split}
(p;q)_{n+1} \sum_{m\geq 0}p^m &\sum_{i=0}^{k} q^{im} 
\mathbf{ps}_m(F_{S,n-k}h_{k-i})\\
&=(p;q)_{n+1} \sum_{i=0}^k \sum_{\sigma \in \sh(\alpha, 
\epsilon_{n-k+1}^{n-i})} \sum_{m\geq 0}(pq^i)^m \mathbf{ps}_m 
(F_{\Des(\sigma),n-i})
\\
&=(p;q)_{n+1}  \sum_{i=0}^k  \sum_{\sigma \in \sh(\alpha, 
\epsilon_{n-k+1}^{n-i})} \frac{ (pq^i)^{\des(\sigma)+1} 
q^{\maj(\sigma)}}{(pq^i;q)_{n-i+1}}
\\
&=\sum_{i=0}^k (p;q)_i  \sum_{\sigma \in \sh(\alpha, 
\epsilon_{n-k+1}^{n-i})}  (pq^i)^{\des(\sigma)+1} q^{\maj(\sigma)}
\\
&=\sum_{i=0}^k \sum_{\sigma \in \sh^*(\alpha, 
\epsilon_{n-k+1}^{n-i})}  (pq^i)^{\des(\sigma)+1} q^{\maj(\sigma)},
\end{split}
\]
with the second equation following from \cite[Lemma 5.2]{gr} and the 
fourth equation following from the $r=0, m=n-k$ case of 
Lemma~\ref{mainlem}.
\end{proof}

We can now deduce Theorem \ref{pqthm}.
\begin{proof}[Proof of Theorem \ref{pqthm}]
Recall \eqref{alambdaeqn} that we can express 
$a_{\lambda,j}^{\maj',\des}(q,p)$ in terms of 
$\mathbf{ps}_m(Q_{(\mu,1^{k-i}),j})$, where $\lambda=(\mu,1^k)$ and 
$\mu\vdash n-k$ has no parts equal to $1$. It is clear from 
\eqref{plethysm1eqn} that $Q_{(\mu,1^{k-i}),j}=Q_{\mu,j}h_{k-i}$. So 
\eqref{alambdaeqn} can be rewritten
\begin{equation} \label{newalambdaeqn}
a_{\lambda,j}^{\maj',\des}(q,p)=(p;q)_{n+1}\sum_{m\geq 0} 
p^m\sum_{i=0}^k q^{im}\,\mathbf{ps}_m(Q_{\mu,j}h_{k-i}).
\end{equation}
For any $j$ such that $1\leq j\leq \frac{n-k}{2}$, we therefore have
\begin{equation} \label{differenceeqn}
\begin{split}
&a_{\lambda,j}^{\maj',\des}(q,p)-a_{\lambda,j-1}^{\maj',\des}(q,p)\\
&\qquad =(p;q)_{n+1}\sum_{m\geq 0} p^m\sum_{i=0}^k 
q^{im}\,\mathbf{ps}_m((Q_{\mu,j}-Q_{\mu,j-1})h_{k-i}).
\end{split}
\end{equation}
Now by Theorem \ref{schurthm}, $Q_{\mu,j}-Q_{\mu,j-1}$ is a 
nonnegative integer linear combination of Schur functions $s_\rho$ 
for $\rho\vdash n-k$. By \cite[Theorem 7.19.7]{stanley2}, each 
$s_\rho$ is in turn a nonnegative integer linear combination of 
fundamental quasisymmetric functions $F_{S,n-k}$ for $S\subseteq 
[n-k-1]$. So \eqref{differenceeqn} belongs to $\N[q,p]$ by 
Proposition \ref{fundprop}.
\end{proof}


\section*{Acknowledgements}
The research presented here began while both authors were visiting 
the Centro di Ricerca Matematica Ennio De Giorgi at the Scuola 
Normale Superiore  in Pisa as participants in the program 
`Configuration Spaces: Geometry, Combinatorics and Topology', 
May--June 2010.  We thank the members of the scientific committee, A. 
Bj\"orner,  F. Cohen, C. De Concini, C. Procesi, and M. Salvetti, for 
inviting us to participate, and the center for its support.

\end{document}